\documentclass[12pt]{amsart}
\usepackage[dvipdfmx]{graphicx}
\usepackage{color}
\usepackage{amsmath,amssymb,amsthm}
\usepackage{amscd}
\usepackage{comment}
\usepackage{overpic}
\usepackage{tabularx}
\usepackage[dvipdfm,left=30truemm,right=30truemm,top=30truemm,bottom=30truemm]{geometry}

\usepackage{bm}

\theoremstyle{plain} 
\newtheorem{theorem}{Theorem}[section]

\theoremstyle{definition}

\makeatletter
\def\Bline{%
\noalign{\ifnum0=`}\fi\hrule \@height 1pt \futurelet
\reserved@a\@xhline}
\@addtoreset{equation}{section}

\makeatother

\allowdisplaybreaks[3]

\newcommand{\R}{\mathbb{R}}

\newcommand{\n}{\mathbf{n}}
\renewcommand{\v}{\mathbf{v}}
\newcommand{\x}{\mathbf{x}}
\newcommand{\p}{\mathbf{p}}

\renewcommand{\phi}{\varphi}
\renewcommand{\epsilon}{\varepsilon}

\newcommand{\rank}{\mathrm{rank}}

\renewcommand{\geq}{\geqslant}

\begin{document}

\title[Height and distance-squared functions on implicit plane curves]
{Families of height and distance-squared functions on implicit plane curves}

\author[M.~Hasegawa]
{Masaru HASEGAWA}

\address[Masaru Hasegawa]{%
Department of Information Science, Center for Liberal Arts and Sciences, Iwate Medical University,
1-1-1 Idaidori, Yahaba-cho, Shiwa-gun, Iwate 028-3694, Japan.}
\email{mhase@iwate-med.ac.jp}

\subjclass[2010]{%
 Primary 53A04; 
 Secondary 58K05 
}

\keywords{%
Height function, Distance-squared function, Implicit plane curve, Inflection, Vertex, Evolute.
} 

\thanks{The research underlying this work was supported by FAPESP
post-doctoral grant 2013/02543-1 during the author's post-doctoral
period at ICMC-USP}


\allowdisplaybreaks[4]



\maketitle


\begin{abstract}
We introduce families of Lagrange functions associated with height and distance-squared functions on implicit plane curves. 
We show that degeneracies of these families characterize geometric properties of the curves, such as inflections and vertices, and give geometric interpretations of the corresponding Lagrange multipliers in terms of contact with lines and circles.
\end{abstract}


\section{Introduction}

Let $g:\R^2\to\R$ be a smooth function, and set
\begin{equation*}
F:\R^2\times\R\to\R,\quad F(\x,\lambda)=g(\x)+\lambda f(\x).
\end{equation*}
It is well known that $F$ is the Lagrange function and $\lambda$ is
called the Lagrange multiplier. 
The condition $\nabla F(\x,\lambda) = 0$ holds if and only if $\nabla g(\x)=-\lambda\nabla f(\x)$ and $f(\x)=0$. 
Therefore, if $\nabla f(\x)\ne 0$ and $\nabla g(\x) \ne 0$ then $\nabla F(\x,\lambda) = 0$ if and only if $f(\x)=0$ is tangent to $g(\x)=d$ at $\x$ for some $d$. 
The relation between contact geometry and singularity theory has been extensively studied (see \cite{IRRT2015} for example).
This suggests that more degenerate differential conditions on $F$ correspond to more degenerate contact between $f(\x)=0$ and $g(\x)=d$.  

In \cite{Hasegawa2017}, the author gave an answer to this question. 
For an implicit surface $M=\{\x\in\R^3|f(\x)=0\}$, where $f:\R^3\to\R$ is a smooth function, the author introduced two families of functions:
\begin{align*}
& H:\R^3\times\R\times S^2 \to \R,\quad H(\x,\lambda,\v)= \langle \x,\v \rangle+\lambda f(\x);\\
& D:\R^3\times\R\times\R^3\setminus M \to\R,\quad D(\x,\lambda,\p) = |\x-\p|^2+\lambda f(\x). 
\end{align*}
The functions $H$ and $D$ measure the contact of $M$ with planes and spheres, respectively, and hence play the roles of families of height and distance-squared functions on $M$. 
Formulas for geometric invariants of implicit curves are well known (see \cite{Goldman2005} for example), while inflections and vertices of parametrized plane curves have been studied from the viewpoint of singularity theory (see \cite{DT2018,ST2017} for example).
It is natural to consider similar families of functions on implicit plane curves. 
We investigate these invariants from a different viewpoint:
the degeneracies of the associated Lagrange functions and the geometric
meaning of the corresponding Lagrange multipliers. 
We show that these degeneracies characterize geometric properties of implicit plane curves, such as inflections and vertices, through their contact with lines and circles.

In Section 2, we introduce families of height and distance-squared functions on implicit plane curves and characterize their singularities in terms of inflections and vertices. 
In Section 3, we apply the distance-squared family to the evolute of an
implicit plane curve and show, through the ordinary $3/2$-cusp, a
difference between the evolutes obtained from the implicit and
parametric representations of the curve.

The author would like to express his sincere gratitude to Toshizumi Fukui for fruitful suggestions. 


\section{Families of height and distance-squared functions on implicit plane curves}

Let $f:\R^2\to\R$ be a smooth function, and let $C$ denote an implicit plane curve $\{(x,y)=\x\in\R^2|f(\x)=0\}$. The unit normal vector to $C$ is given by $\n=\nabla f/|\nabla f|$, where $\nabla f$ is the gradient as row vector. The curvature of $C$ is given by
\begin{equation}
\label{eq:kappa}
\kappa = -\dfrac{f_x^2 f_{yy} - 2 f_x f_y f_{xy} + f_y^2 f_{xx}}{|\nabla f|^3} = \dfrac{\begin{vmatrix}
\mathcal{H}(f) & (\nabla f)^T \\ 
\nabla f & 0
\end{vmatrix}}
{|\nabla f|^3},
\end{equation}
where $\mathcal{H}(f)$ is the Hessian matrix of $f$ (see \cite{Goldman2005} for example).

Let $g:\R^2\to\R$ be a smooth function, and set
\begin{equation}
\label{eq:Lagrange}
F:\R^2\times\R\to\R,\quad F(\x,\lambda)=g(\x)+\lambda f(\x).
\end{equation}
As mentioned above, if $\nabla f(\x)\ne 0$ and $\nabla g(\x) \ne 0$ then $\nabla F(\x,\lambda) = 0$ if and only if $C$ is tangent to $g(\x)=d$ at $\x$ for some $d$. 
It follows from that, when we take $\langle \x, \v \rangle$ $(\v\in S^1)$ and $|\x-\p|^2$ $(\p\in\R^2)$ as $g$ in \eqref{eq:Lagrange}, the condition $\nabla F=0$ holds if and only if $C$ is tangent to a line normal to $\v$ and to a circle centered at $\p$, respectively.

Consider
\begin{align*}
&H:\R^2\times\R\times S^1 \to \R,\quad H(\x,\lambda,\v) = \langle \x,\v \rangle + \lambda f(\x),\\
&D:\R^2\times\R\times \R^2 \to \R,\quad D(\x,\lambda,\p)=|\x-\p|^2 + \lambda f(\x),
\end{align*}
and define
\begin{align*}
h_{\v}(\x,\lambda)=H(\x,\lambda,\v),\quad
d_{\p}(\x,\lambda)=D(\x,\lambda,\p).
\end{align*}
According to the above reasons, we expect that $h_{\v}$ and $d_{\p}$ play the roles of the height and distance-squared functions on $C$, respectively. 

\subsection{Families of height functions}

\begin{theorem}
\label{thm:height}
Suppose that $f(\x_0)=0$ and $\nabla f(\x_0)\ne(0,0)$. Then
\begin{enumerate}
\item 
$\nabla h_{\v}=0$ at $(\x_0,\lambda_0)$ if and only if $\lambda_0=\pm 1/|\nabla f(\x_0)|$ and $\v=\mp \n(\x_0)$,  
\item 
$\nabla h_{\v}=0$ and $\det(\mathcal{H}(h_{\v}))=0$ at $(\x_0,\lambda_0)$ if and only if $\lambda_0=\pm 1/|\nabla f(\x_0)|$, $\v=\mp \n(\x_0)$ and $C$ has an inflection at $\x_0$.
 \end{enumerate}
\end{theorem}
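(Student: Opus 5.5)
Both parts follow from direct computation of the gradient and Hessian of $h_{\v}(\x,\lambda)=\langle\x,\v\rangle+\lambda f(\x)$ in the three variables $(x,y,\lambda)$, combined with the bordered-determinant expression \eqref{eq:kappa} for $\kappa$.

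For (1), we compute
\[
\nabla h_{\v}=(\v+\lambda\nabla f(\x),\ f(\x)).
\]
At $\x_0$ the last component vanishes automatically, since $f(\x_0)=0$. So $\nabla h_{\v}=0$ reduces to $\v=-\lambda_0\nabla f(\x_0)$. Because $|\v|=1$ and $\nabla f(\x_0)\neq 0$, taking norms gives $|\lambda_0|=1/|\nabla f(\x_0)|$. Hence $\lambda_0=\pm1/|\nabla f(\x_0)|$ and $\v=\mp\nabla f(\x_0)/|\nabla f(\x_0)|=\mp\n(\x_0)$. Conversely, these values make $\v+\lambda_0\nabla f(\x_0)=0$ by direct substitution.

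For (2), the Hessian of $h_{\v}$ with respect to $(x,y,\lambda)$ is the $3\times3$ bordered matrix
\[
\mathcal{H}(h_{\v})=\begin{pmatrix}\lambda\mathcal{H}(f) & (\nabla f)^T\\ \nabla f & 0\end{pmatrix}.
\]
The key step is to extract the scalar $\lambda$ from this determinant. Expanding, every term uses exactly two entries from the top-left $2\times2$ block together with one entry from the last row and one from the last column. Equivalently, rescaling the last row and last column by $\lambda$ multiplies the determinant by $\lambda^2$ and makes the top-left block a common factor $\lambda$ of every row. Either way,
\[
\det\mathcal{H}(h_{\v})=\lambda\det\begin{pmatrix}\mathcal{H}(f) & (\nabla f)^T\\ \nabla f & 0\end{pmatrix}=\lambda\,\kappa\,|\nabla f|^3,
\]
where the last equality is \eqref{eq:kappa}. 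This can also be checked directly: the determinant equals $-\lambda(f_y^2f_{xx}-2f_xf_yf_{xy}+f_x^2f_{yy})$.

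At a critical point, part (1) gives $\lambda_0=\pm1/|\nabla f(\x_0)|\neq0$, and $|\nabla f(\x_0)|\neq0$ by hypothesis. Therefore $\det\mathcal{H}(h_{\v})=0$ at $(\x_0,\lambda_0)$ if and only if $\kappa(\x_0)=0$, that is, $C$ has an inflection at $\x_0$ (taking inflection to mean vanishing curvature). Combining this with (1) gives (2).

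The only step requiring care is this determinant identity, specifically getting the factor $\lambda$ and the sign consistent with \eqref{eq:kappa}. The sign is in any case irrelevant for the vanishing condition.
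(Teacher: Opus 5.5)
Your proof is correct and is essentially the paper's argument. You compute $\nabla h_{\v}=(\v+\lambda\nabla f,f)$ for (1), and for (2) you use $\det\mathcal{H}(h_{\v})=\lambda\kappa|\nabla f|^3$ from \eqref{eq:kappa} together with $\lambda_0\neq 0$.

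One small slip is in your counting justification of the factor $\lambda$. Since the $(3,3)$ entry is zero, each nonzero term of the $3\times 3$ determinant uses exactly one entry of the top-left block, not two, plus one entry from the last row and one from the last column; that single block entry is exactly why a single $\lambda$ comes out. Your rescaling argument and your explicit expansion are both correct, so the conclusion is unaffected.
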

\begin{proof}
We have $\nabla h_\v = (\v + \lambda \nabla f, f)$.
The assertion (1) is immediately deduced from this.
 
The Hessian matrix of $h_\v$ is given by
\[
\mathcal{H}(h_\v)
= 
\begin{pmatrix}
\lambda \mathcal{H}(f) & (\nabla f)^\mathrm{T} \\
\nabla f & 0
\end{pmatrix}.
\]
It follows from \eqref{eq:kappa} that
$$
\det(\mathcal{H}(h_\v)) = \lambda \kappa|\nabla f|^3 
$$
which establishes the assertion (2).
\end{proof}
Theorem \ref{thm:height} shows that $h_{\v}$ plays the role of the height function on $C$ along $\v$.

\subsection{Families of distance-squared functions}

\begin{theorem}
\label{thm:distance}
Suppose that $f(\x_0)=0$ and $\nabla f(\x_0)\ne(0,0)$. Then
\begin{enumerate}
\item
$\nabla d_{\p}=0$ at $(\x_0,\lambda_0)$ if and only if $\p=\x_0+\lambda_0|\nabla f(\x_0)|\n(\x_0)/2$ holds.
\item
$\nabla d_{\p}=0$ and $\det(\mathcal{H}(d_{\p}))=0$ at $(\x_0,\lambda_0)$ if and only if $\lambda_0=2/(\kappa(\x_0)|\nabla f(\bm{x_0})|)$ and $\p=\x_0+\n(\x_0)/\kappa(\x_0)$. 
\item 
$\nabla d_{\p}=0$, $\det(\mathcal{H}(d_{\p}))=0$ and $\rank(J(\phi_d))<3$ at $(\x_0,\lambda_0)$ if and only if $\lambda_0=2/(\kappa(\x_0)|\nabla f(\bm{x_0})|)$, $\p=\x_0+\n(\x_0)/\kappa(\x_0)$ and $C$ has a vertex at $\x_0$, where
\[
\phi_d:\R^2\times\R\to\R^4,\quad \phi_d(\x,\lambda) =(\nabla d_{\p}(\x,\lambda),\det(\mathcal{H}(d_{\p}(\x,\lambda)))) 
\]
and $J(\phi_d)$ is the Jacobian matrix of $\phi_{d}$. 
\end{enumerate}
\end{theorem}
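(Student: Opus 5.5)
We follow the proof of Theorem \ref{thm:height}: compute the gradient and the Hessian of $d_\p$ directly, then evaluate the determinants with the formula \eqref{eq:kappa} for $\kappa$.

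\emph{Step 1: critical points.} We have
\[
\nabla d_\p=\bigl(2(\x-\p)+\lambda\nabla f,\ f\bigr).
\]
So $\nabla d_\p=0$ at $(\x_0,\lambda_0)$ if and only if $\p=\x_0+\lambda_0\nabla f(\x_0)/2=\x_0+\lambda_0|\nabla f(\x_0)|\n(\x_0)/2$, since $f(\x_0)=0$ already holds. This gives (1).

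\emph{Step 2: the Hessian.} The Hessian is
\[
\mathcal{H}(d_\p)=\begin{pmatrix} 2I+\lambda\mathcal{H}(f) & (\nabla f)^T\\ \nabla f & 0\end{pmatrix}.
\]
Expanding the bordered determinant, which is linear in the upper-left block modulo the border, gives
\[
\det\mathcal{H}(d_\p)=\det\begin{pmatrix}2I&(\nabla f)^T\\ \nabla f&0\end{pmatrix}+\lambda\det\begin{pmatrix}\mathcal{H}(f)&(\nabla f)^T\\ \nabla f&0\end{pmatrix}=-2|\nabla f|^2+\lambda\kappa|\nabla f|^3 .
\]
Here the bordered determinant equals $-(f_y^2a_{11}-2f_xf_ya_{12}+f_x^2a_{22})$. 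With $A=2I$ this is $-2|\nabla f|^2$, and the second term is $\lambda\kappa|\nabla f|^3$ by \eqref{eq:kappa}.

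Setting this to zero forces $\kappa(\x_0)\neq0$ and $\lambda_0=2/(\kappa|\nabla f|)$. Substituting into (1) gives $\p=\x_0+\n/\kappa$. Conversely, these values satisfy both conditions, which proves (2).

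\emph{Step 3: the Jacobian.} Define $\Delta(\x,\lambda)=-2|\nabla f|^2+\lambda\kappa|\nabla f|^3$. The Jacobian of $\phi_d$ is the $4\times3$ matrix whose first three rows are $\mathcal{H}(d_\p)$ and whose last row is $\nabla\Delta=(\Delta_\x,\Delta_\lambda)$. At the point of (2), $\mathcal{H}(d_\p)$ is singular. Its rank is exactly $2$, because the rows $(\nabla f,0)$ and the border column guarantee at least rank $2$ when $\nabla f\neq0$. Hence $\rank J(\phi_d)<3$ if and only if $\nabla\Delta$ lies in the row space of $\mathcal{H}(d_\p)$. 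Equivalently, $\nabla\Delta\cdot\xi=0$ for a kernel vector $\xi$ of the symmetric matrix $\mathcal{H}(d_\p)$.

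We find the kernel vector directly. Write $\mathbf{t}=(-f_y,f_x)$ for the tangent direction. From the last row, $\xi=(\mathbf{t},\mu)$ for some $\mu$. The first block row reads $(2I+\lambda\mathcal{H}(f))\mathbf{t}^T+\mu(\nabla f)^T=0$, and this determines $\mu$. The condition then becomes
\[
\nabla\Delta\cdot\xi=\mathbf{t}\cdot\Delta_\x+\mu\Delta_\lambda=0 .
\]
Now use $\Delta_\lambda=\kappa|\nabla f|^3$ and $\Delta=0$ at the point. Since $\lambda_0$ is a constant, we get
\[
\mathbf{t}\cdot\Delta_\x=\mathbf{t}\cdot\nabla\bigl(-2|\nabla f|^2\bigr)+\lambda_0|\nabla f|^3\,(\mathbf{t}\cdot\nabla\kappa)+\lambda_0\kappa\,\mathbf{t}\cdot\nabla\bigl(|\nabla f|^3\bigr).
\]

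\emph{Main obstacle.} The hard step is showing that all terms except $\lambda_0|\nabla f|^3(\mathbf{t}\cdot\nabla\kappa)$ cancel against $\mu\Delta_\lambda$. We would verify this by substituting $\lambda_0\kappa|\nabla f|=2$. Then $\lambda_0\kappa\,\mathbf{t}\cdot\nabla(|\nabla f|^3)=\tfrac{3}{|\nabla f|}|\nabla f|\,\mathbf{t}\cdot\nabla(|\nabla f|^2)$ after simplification. The $\mu$ term, computed from $\mu|\nabla f|^2=-\mathbf{t}(2I+\lambda_0\mathcal{H}(f))\nabla f^T=-\lambda_0\,\mathbf{t}\mathcal{H}(f)\nabla f^T$, is likewise a multiple of $\mathbf{t}\cdot\nabla(|\nabla f|^2)=2\,\mathbf{t}\mathcal{H}(f)\nabla f^T$. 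We expect the coefficients to combine to zero. If they do not, we would fall back on brute force: work in coordinates with $\x_0=0$ and $\nabla f(\x_0)=(0,f_y)$, and compare with the expression for $\kappa_x$.

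The condition then reduces to $\lambda_0|\nabla f|^3\,\mathbf{t}\cdot\nabla\kappa=0$. Since $\lambda_0\neq0$, this says the derivative of $\kappa$ along $C$ vanishes, i.e.\ $C$ has a vertex at $\x_0$. This gives (3).
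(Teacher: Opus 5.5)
Your proposal is correct. Parts (1) and (2) match the paper's computation; for part (3) you take a different route from the paper.

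The step you left as an expectation does close, and you should state it as a verified identity rather than keep a brute-force fallback. Put $P=\mathbf{t}\,\mathcal{H}(f)(\nabla f)^T$, so that $\mathbf{t}\cdot\nabla(|\nabla f|^2)=2P$, and use $\lambda_0\kappa|\nabla f|=2$. The three contributions are:
\begin{itemize}
\item $\mathbf{t}\cdot\nabla(-2|\nabla f|^2)=-4P$;
\item $\lambda_0\kappa\,\mathbf{t}\cdot\nabla(|\nabla f|^3)=\tfrac32\lambda_0\kappa|\nabla f|\cdot 2P=6P$;
\item $\mu\Delta_\lambda=-(\lambda_0P/|\nabla f|^2)\,\kappa|\nabla f|^3=-\lambda_0\kappa|\nabla f|P=-2P$.
\end{itemize}
These sum to zero, so $\nabla\Delta\cdot\xi=\lambda_0|\nabla f|^3\,\mathbf{t}\cdot\nabla\kappa$ exactly. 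It is also worth one clause that the kernel vector can be normalized to $\xi=(\mathbf{t},\mu)$: if $\xi_{\x}=0$, the first block row gives $\mu\nabla f=0$, hence $\xi=0$.

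The paper handles (3) differently. It rotates so that $f_x(\x_0)=0$ and uses the implicit function theorem to compute $d\kappa/dx(\x_0)$, obtaining the vertex condition $-f_{xxx}f_y+3f_{xx}f_{xy}=0$. It then only asserts that a direct calculation of $J(\phi_d)$, by analogy with the ridge computation for surfaces, gives $\rank J(\phi_d)<3$ exactly under this condition. The $3\times3$ minors are never displayed.

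Your reduction avoids the minors. Since the bordered Hessian has rank exactly $2$ when $\nabla f\ne0$, the rank drop of the $4\times3$ Jacobian becomes the single scalar condition $\nabla\Delta\cdot\xi=0$. This is coordinate-free and produces $\mathbf{t}\cdot\nabla\kappa$ directly, which explains why vertices appear instead of merely matching two polynomial expressions. The paper's route buys an explicit polynomial criterion in the derivatives of $f$ at $\x_0$ in normalized coordinates. Yours actually supplies the Jacobian computation that the paper leaves out.
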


\begin{proof}
Since we have $\nabla d_\p=(2(\x-\p)+\lambda \nabla f,f)$, we obtain 
\begin{align*}
2(\x-\p)+\lambda |\nabla f|\,\n& =0, \\
\p&=\x+\dfrac{\lambda}2|\nabla f|\,\n,
\end{align*} 
which proves the assertion (1). 

The Hessian matrix of $d_\p$ is given by
$$
\mathcal{H}(d_\p) = 
\begin{pmatrix}
\begin{pmatrix}2&0\\0&2\end{pmatrix}+\lambda \mathcal{H}(f) & (\nabla f)^T\\
\nabla f & 0
\end{pmatrix},
$$
and thus we have
$$
\det(\mathcal{H}(d_\p))) = -2|\nabla f|^2+\lambda\left|\begin{matrix}\mathcal{H}(f)&(\nabla f)^T\\\nabla f&0\end{matrix}\right|.
$$
It follows from \eqref{eq:kappa} that 
$$
\det(\mathcal{H}(d_\p))=-2|\nabla f|^2+\lambda \kappa |\nabla f|^3.
$$
Therefore, 
\begin{align*}
-2|\nabla f|^2+\lambda \kappa |\nabla f|^3 & = 0 \\
\lambda & = \dfrac2{\kappa |\nabla f|},
\end{align*}
which proves the assertion (2). 

After a suitable rotation in the $xy$-plane if necessary, we may take
the $y$-axis as the normal direction to $C$ at $\x_0$. 
So we may assume that $f_x(\x_0)=0$ and $f_y(\x_0)\ne0$.  
Let $\lambda_0=2/(\kappa(\x_0)|\nabla f(\bm{x_0})|)$ and $\p=\x_0+\n(\x_0)/\kappa(\x_0)$ hold. 
Since $f_y(\x_0)\ne0$, by the implicit function theorem, the curve $f(x,y)=0$ can be expressed locally as the graph $y=y(x)$ satisfying $f(x,y(x))=0$ close to $\x_0$. 
It follows from \eqref{eq:kappa} that 
\begin{equation*}
\dfrac{d\kappa}{dx}(\x_0)=\dfrac{f_y(\x_0)^3(-f_{xxx}(\x_0)f_y(\x_0)+3f_{xx}(\x_0)f_{xy}(\x_0))}{|\nabla f(\x_0)|^5}.
\end{equation*}
Hence, $C$ has a vertex at $\x_0$ if and only if 
\begin{equation}
\label{eq:vertex}
-f_{xxx}(\x_0)f_y(\x_0)+3f_{xx}(\x_0)f_{xy}(\x_0)=0.
\end{equation}
We shall also calculate $J(\phi_d)$. 
A direct calculation of $J(\phi_d)$, similar to that for ridge points
of implicit surfaces in \cite{Hasegawa2017}, shows that $\rank(J(\phi_d))<3$ if and only if \eqref{eq:vertex} holds. 
\end{proof}

Theorem \ref{thm:distance} shows that $d_\p$ plays the role of the distance-squared function on $C$ from $\p$. 

\section{An application to evolutes}

Theorem \ref{thm:distance} shows that the following set $\mathcal{B}(D)$ coincides with the evolute of $C$:
\[
 \mathcal{B}(D)=\{\p\in\R^2|\nabla d_{\p}(\x,\lambda,\p)=0, \det(\mathcal{H}_{d_{\p}})(\x, \lambda,\p)=0\text{ for some }(\x,\lambda,\p)\in\R^2\times \R\times \R^2\}.
\]

Let $\gamma=\gamma(t):I\subset \R \to \R^2$ be a plane curve (possibly with singularities), and let $\hat D:I\times \R^2 \to \R$ be a family of distance-squared functions on $\gamma$ defined by $\hat D(t,\ \p)=|\gamma(t)-\p|^2$. 
The distance-squared function $\hat d_\p(t) = \hat D(t,\p)$ on $\gamma$ from any point $\p$ on the normal line at the singular point $\gamma(t_0)$ has an $A_{\geq 2}$-singularity at $t_0$ (i.e., $\hat d_\p(t_0)=\hat d_\p(t_0)=0$). 
Therefore, this normal line should be regarded as part of the evolute of $\gamma$. 
The closure of the centers of osculating circles of $\gamma$ at its regular points is called the \textit{proper evolute} of $\gamma$, and the \textit{full evolute} is defined as the bifurcation set $\mathcal{B}(\hat D)$ of $\hat D$, where 
$$
\mathcal{B}(\hat D)=\left\{\p\in\R^2\left|\dfrac{d \hat D}{dt}(t,\p)=\dfrac{d^2 \hat D}{dt^2}(t,\p)=0\text{ for some }(t,\p)\in I \times \R^2\right.\right\}
$$ 
The full evolute is the union of the proper evolute and the normal line at the singular point counted with multiplicity. 
In \cite{ST2017}, the authors studied deformations of full evolutes of plane curves with singularities.   

The ordinary $3/2$-cusp $C$ has two expressions: 
one is the parameterized curve $\gamma(t)=(t^2,t^3)$; 
another is the implicit curve $f(x,y)=x^3-y^2=0$. 
The full evolute of $\gamma$ consists of the proper evolute
\begin{equation}
\label{eq:evo}
e(\gamma)(t) = \left(-\dfrac{9t^4+2t^2}2,\,\dfrac{4(3t^3+t)}3\right)
\end{equation}
and the line $x=0$ (figure \ref{fig:evo}). 
On the other hand, $\mathcal{B}(D)$ of $f=0$ consists only of the curve 
\begin{equation*}
t\mapsto \left(-\dfrac{9t^2-2t}2,\,\pm\dfrac{4\sqrt{x}(3x+1)}3\right)
\end{equation*}
whose image coincides with the proper evolute \eqref{eq:evo}. 
This example shows that $\mathcal{B}(D)$ for an implicit representation may capture only the proper evolute. 

\begin{figure}[htbp]
\centering
\includegraphics
[width=0.25\textwidth,clip]{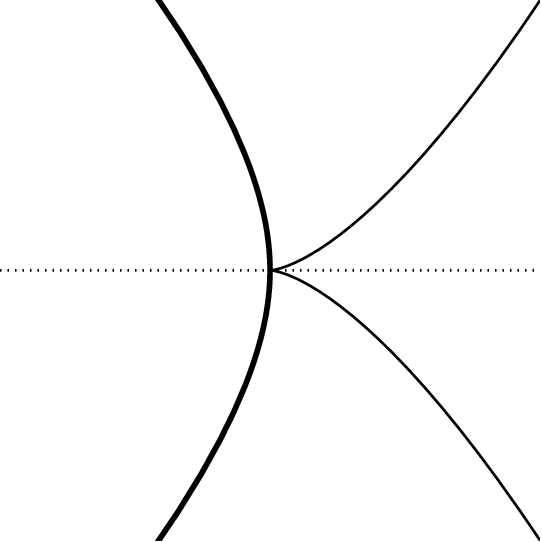}
\caption{Ordinary $3/2$-cusp and its full evolute.}
\label{fig:evo}
\end{figure}

\end{document}